\newtheorem{definition}{Definition}
\newtheorem{theorem}{Theorem}[section]
\newtheorem{corollary}[theorem]{Corollary}
\newtheorem{lemma}[theorem]{Lemma}
\newtheorem{proposition}[theorem]{Proposition}
\newcommand{\Hess}{\mbox{Hess}}
\newcommand{\be}{\begin{equation}}
\newcommand{\ee}{\end{equation}}
\newcommand{\lt}{\left}
\newcommand{\rt}{\right}
\newcommand{\goto}{\rightarrow}
\newcommand*\Lap{\mathop{}\!\mathbin\bigtriangleup}
\newcommand{\al}{\alpha}
\numberwithin{equation}{section}
\title{Super Log-concavity of  the First Eigenfunctions for Horo-convex Domains in Hyperbolic Space}
\author{Guofang Wei}
\address{
Department of Mathematics,
University of California, Santa Barbara,
CA 93106, USA
}
\thanks{GW was partially supported by NSF DMS grant  2403557}
\email{wei@math.ucsb.edu}
\author{Ling Xiao}
\address{
Department of Mathematics,  University of  Connecticut,  Storrs, Connecticut 06269-1009 USA
}
\email{ling.2.xiao@uconn.edu }
\subjclass[2020]{58Jxx, 35Bxx}
\keywords{eigenfunctions, concavity, horo-convex domains }
\begin{document}
\maketitle

\begin{abstract}
In this paper, we prove that the first eigenfunction of the Laplacian for a horo-convex domain $\Omega\subset\mathbb H^n$ is super log-concave when
$\text{diam}(\Omega)$ is not large. Our result is optimal in the sense that there are counterexamples 
when $\Omega$ is not horo-convex or  when
$\text{diam}(\Omega)$ is large respectively.
\end{abstract}

\section{Introduction}
\label{sec-int}
Given a bounded smooth connected domain $\Omega\subset M^n$ of a Riemannian manifold, the
eigenvalue equation of the Laplacian in $\Omega$ with the Dirichlet boundary condition is
\[\Lap\psi=-\mu\psi,\,\, \psi|_{\partial\Omega} = 0.\]
The eigenvalues $\{\mu_i| i =1, 2, \cdots\}$ satisfy $0<\mu_1<\mu_2\leq\mu_3\leq\cdots\goto\infty.$
Consider the first eigenvalue equation
\be\label{int1}
\begin{aligned}
\Lap v+\mu_1v&=0\,\,&\mbox{in $\Omega$}\\
v&=0\,\,&\mbox{on $\partial\Omega.$}
\end{aligned}
\ee
In this paper, we will study the log-concavity property of the first eigenfunction $v>0$ in  hyperbolic space.

The study of log-concavity of the first eigenfunction for the Laplacian has a long history. In 1976, Brascamp-Lieb \cite{BL76} proved that
if $v$ satisfies \eqref{int1} and $\Omega$ is a convex domain in $\mathbb R^n$, then $\log v$ is concave. Later, Caffarelli-Spruck \cite{CS82} gave an elementary proof of this result using the idea of \cite{Kor83}. When $\Omega$ is a convex domain in $\mathbb S^n,$ applying the continuity
method developed in \cite{SWYY85}, Lee-Wang proved the log-concavity of the first eigenfunction in \cite{LW87}. In \cite{KNTW}, the first author and her collaborators obtained the log-concavity of the first eigenfunction for convex domains of surfaces with positive sectional curvature.

Notice that the results mentioned above all require that the ambient manifold $M^n$ has nonnegative sectional curvature. One may ask whether the first eigenfunction of a convex domain is log-concave if not all sectional curvatures of the ambient manifold $M^n$ are nonnegative.  
Unfortunately, the answer to this question is negative even when $M^n$ is the hyperbolic space \cite{Shih1989}. In fact, it is shown in \cite{BCNSWW} that there exists a convex domain $\Omega\subset\mathbb H^n$ such that the first eigenfunction $v$ of $\Omega$ is not log-concave in any sense. That is, the largest Hessian eigenvalue of $\log v$ goes to positive infinity. This is also the case even for manifolds that have only
a single tangent plane of negative sectional curvature \cite{khan2024negative}. Hence, to prove the log-concavity of the first eigenfunction of $\Omega$ in a manifold $M^n$ that contains tangent planes of negative sectional curvature, we need more restrictions on $\Omega.$


To start, let us study the model case, that is, when $M^n$ is the hyperbolic space $\mathbb H^n.$ For $\mathbb H^n,$ a natural stronger convexity to consider is horo-convexity. Recall that a domain $\Omega\subset\mathbb H^n$ is said to be \textbf{horo-convex}, if for every point $p\in\partial\Omega$, $\Omega$ lies on the convex
side of some horosphere $S_h(p)$ through $p$. When $\Omega$ has a smooth boundary, it is equivalent to the second fundamental form of $\partial\Omega$ satisfying $\mathrm{II} \ge I_{n-1}$, that is, the principal curvatures of $\partial \Omega$ are $\ge 1$.

Indeed, we prove that the first eigenfunction of horo-convex domains is log-conacve when the diameter of the domain is not too big. In fact, we prove a stronger result. To state it, we need the following definition.

\begin{definition}
\label{def-sb1}
We say that a function $f$ is 
\textbf{$\pmb{\lambda}$-log-concave} if the matrix $\lt(-(\log f)_{ij}-\lambda|\nabla\log f|\delta_{ij}\rt)$ is positive semi-definite;
\textbf{strictly $\pmb{\lambda}$-log-concave} if the matrix $\lt(-(\log f)_{ij}-\lambda|\nabla\log f|\delta_{ij}\rt)$ is positive definite.
We also call a 1-log-concave function a \textbf{super log-concave} function.
\end{definition}

Here is our main result.
\begin{theorem}
\label{thm1}
Let $\Omega\subset\mathbb H^n$ be a horo-convex domain with diameter $\text{diam}(\Omega)\leq c_0,$ where $c_0=c_0(n)>0$ is a positive constant that only depends on $n.$
Let $v$ be the solution of the eigenvalue problem \eqref{int1}, then $v$ is super log-concave. In particular $v$ is log-concave.
\end{theorem}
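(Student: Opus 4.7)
Setting $\varphi := -\log v$, the super log-concavity of $v$ is equivalent to positivity of the symmetric $2$-tensor
\[
Q_{ij} := \varphi_{ij} - |\nabla\varphi|\, g_{ij},
\]
and the first eigenvalue equation rewrites as $\Delta\varphi - |\nabla\varphi|^2 = \mu_1$ in $\Omega$ with $\varphi \to +\infty$ at $\partial\Omega$. My plan is a continuity argument in the spirit of \cite{SWYY85,LW87,KNTW}: deform $\Omega$ through a one-parameter family of horo-convex domains $\{\Omega_t\}_{t\in[0,1]}$ of diameter at most $c_0$, from a small geodesic ball $\Omega_0$ (for which $Q\ge 0$ follows from a direct ODE analysis of the radial first eigenfunction) to $\Omega_1 = \Omega$, and to show by an openness-closedness argument that the smallest eigenvalue $\lambda$ of $Q$ stays non-negative throughout the deformation.

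The boundary analysis explains why horo-convexity is the correct hypothesis and simultaneously handles a neighborhood of $\partial\Omega$. In Fermi coordinates around $\partial\Omega$, writing $\varphi = -\log(v_\nu) - \log d + O(d)$ where $d$ is the distance to the boundary, a short computation based on $\nabla^2 d|_{T\partial\Omega} = -\mathrm{II}$ yields
\[
\varphi_{\nu\nu} \sim \tfrac{1}{d^2}, \qquad \varphi_{\alpha\beta} \sim \tfrac{\mathrm{II}_{\alpha\beta}}{d}, \qquad |\nabla\varphi| \sim \tfrac{1}{d},
\]
so $Q_{\nu\nu}>0$ near $\partial\Omega$ automatically, whereas the tangential block $Q_{\alpha\beta} \sim (\mathrm{II}_{\alpha\beta} - g_{\alpha\beta})/d$ is non-negative \emph{precisely} when $\mathrm{II}\ge I_{n-1}$, i.e.\ when $\Omega$ is horo-convex. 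This disposes of the inequality in a definite boundary strip and reduces the task to an interior estimate.

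The technical heart is the interior maximum principle needed for the openness step. Suppose along the deformation $\lambda$ first touches zero at an interior point $p_0$. Rotate an orthonormal frame so that $\varphi_{ij}(p_0)$ is diagonal and $\lambda = Q_{11} = \varphi_{11} - |\nabla\varphi|$, hence $Q_{ii}\ge 0$ for $i\ge 2$, $\nabla Q_{11}(p_0)=0$, and $\Delta Q_{11}(p_0)\ge 0$. Differentiating the equation $\Delta\varphi = |\nabla\varphi|^2 + \mu_1$ twice in the $e_1$ direction and commuting covariant derivatives via the Ricci identity (which in $\mathbb H^n$ produces explicit corrections from $R_{ikjl} = -(g_{ij}g_{kl} - g_{il}g_{kj})$), one obtains, after substituting the critical-point relations and the bounds $Q_{ii}\ge 0$, an inequality of schematic form
\[
0 \le \Delta Q_{11}(p_0) \le -A\,|\nabla\varphi|^2 + 2\mu_1 + \mathcal{E}(n,\varphi),
\]
where $A>0$ is produced by the Cauchy-Schwarz terms $\sum_k (\varphi_{1k})^2$ once one inserts $\varphi_{11}=|\nabla\varphi|$, while $\mathcal{E}$ collects the negative hyperbolic curvature defect. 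The main obstacle is to show that for small enough $c_0=c_0(n)$ the quadratic gain $A|\nabla\varphi|^2$ dominates $\mathcal{E}$; this should follow by combining the Faber-Krahn estimate $\mu_1\gtrsim c_0^{-2}$ with gradient bounds for $\varphi$ coming from the diameter hypothesis and the horo-convex barrier of the boundary analysis, thereby forcing a contradiction. The same mechanism clarifies why the diameter assumption cannot be dropped: for large horo-convex domains the curvature defect overwhelms the quadratic gain, consistent with the counterexamples referenced in the introduction.
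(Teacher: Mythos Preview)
Your overall architecture---continuity method, boundary analysis via horo-convexity, interior maximum principle---matches the paper. The boundary step is essentially the same as the paper's Lemma~\ref{sp1-lem1}. But the interior step, which is the heart of the matter, has a genuine gap, and the mechanism you invoke is not the one that actually closes the argument.

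First, your schematic inequality $0\le \Delta Q_{11}(p_0)\le -A|\nabla\varphi|^2+2\mu_1+\mathcal E$ cannot yield a contradiction in the way you describe. You propose that the smallness of the diameter makes $A|\nabla\varphi|^2$ dominate $\mathcal E$; but small diameter makes $\mu_1$ \emph{large} (Faber--Krahn), so the term $+2\mu_1$ only gets worse, and there is no a~priori lower bound on $|\nabla\varphi|$ at the interior touching point $p_0$ (indeed $|\nabla\varphi|$ vanishes at the maximum of $v$). The correct mechanism, carried out in the paper's Lemma~\ref{lem-sp2.1}, is the opposite one: writing $w=|\nabla\varphi|$ and $\Lambda_{ij}=\varphi_{ij}-w\delta_{ij}$, one has $\sum_\alpha\lambda_\alpha=\mu_1+w^2-nw\ge \mu_1-n^2/4$, and the decisive negative contribution in $\Delta\Lambda_{ii}$ comes from a term $-\sum_{\alpha\in G}\lambda_\alpha^2/w$ arising from $\Delta w$. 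It is the \emph{largeness} of $\mu_1$ (hence of the ``good'' eigenvalues $\lambda_\alpha$) that makes this term beat $2w^2$, not any lower bound on $w$. Your inequality has $\mu_1$ on the wrong side.

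Second, two technical ingredients are missing. You must first rule out $w(p_0)=0$; the paper does this by a separate short argument comparing $\Lambda$ with the auxiliary tensor $\tilde\Lambda_{ij}=\varphi_{ij}-\langle\nabla\varphi,\mathbf v\rangle\delta_{ij}$ for a parallel vector field $\mathbf v$, forcing $\varphi_{11}(p_0)=0$, which contradicts \eqref{sc1}. And a pointwise second-derivative test on the smallest eigenvalue does not, by itself, control the mixed third derivatives $\Lambda_{ij\alpha}$ with $i$ in the null directions and $j$ in the positive directions, nor does it cope with the non-smoothness of eigenvalues under crossing. The paper handles both by the constant rank technique of Guan--Ma: it works with $\phi=S_{l+1}(\Lambda)$, splits indices into ``good'' and ``bad'' sets, uses the relation $\sum_{i\in B}\Lambda_{iji}\thicksim (n-l)a_j\lambda_j$ together with Cauchy--Schwarz to absorb the third-order terms (equations \eqref{sp2.12}--\eqref{sp2.13}), and then applies the strong maximum principle to get $\phi\equiv 0$ on the set $\{w\neq 0\}$, which is the actual contradiction. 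Your sketch does not supply a substitute for this machinery. As a minor point, you also leave the deformation unspecified; the paper uses the Hu--Li--Wei flow \eqref{sd1}, which keeps the evolving domains \emph{strictly} horo-convex and inside a fixed ball, feeding both the boundary lemma and the eigenvalue lower bound.
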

 Our result is the first log-concavity estimate for the first eigenfunction of domains in a manifold with negative sectional curvatures without requiring the radial symmetry of the domain. The result is optimal in the following sense. First, as we mentioned before, there are examples that show that the first eigenfunction of convex domains in $\mathbb H^n$ does not have to be log-concave. Second, when $\Omega\subset\mathbb H^3$ is a geodesic ball
 , one can numerically show that when $\text{diam}(\Omega)>c_0$ for some $c_0\in(2, 3),$ the first eigenfunction is not super log-concave. More details will be given in Section \ref{sec-ball}.

With log-concavity of the first eigenfunction, by \cite{SWYY85}, one can reduce the estimate of the Dirichlet fundamental gap to the Neumann one. Hence combining the log-concavity of $v$ obtained in Theorem~\ref{thm1} with \cite[Theorem 3]{CLR} we get (see the proof of \cite[Corollary 2.2]{KNTW} for more details) 
\begin{corollary}
\label{cor1}
Let $\Omega\subset\mathbb H^n$ be a horo-convex domain with diameter $D:=\text{diam}(\Omega)\leq c_0$ for some $c_0=c_0(n)>0,$
$\mu_i (i=1, 2)$ be the first two eigenvalues of the Laplacian on $\Omega$ with Dirichlet boundary condition. Then
\[\mu_2-\mu_1 \ge \tfrac{\pi^2}{D^2} e^{-c(n) D}.\]
\end{corollary}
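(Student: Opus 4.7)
The plan is to follow the Singer--Wong--Yau--Yau reduction \cite{SWYY85}, which converts the fundamental-gap problem into a weighted Neumann-eigenvalue problem for a drift Laplacian once log-concavity of the first eigenfunction is in hand. Write $v_1 := v$ and let $v_2$ be a second Dirichlet eigenfunction with eigenvalue $\mu_2$. A direct computation from $\Lap v_i = -\mu_i v_i$ shows that the ratio $w := v_2/v_1$ satisfies
\begin{equation*}
\Lap w + 2\, \nabla \log v_1 \cdot \nabla w = -(\mu_2 - \mu_1)\, w \quad \text{on } \Omega.
\end{equation*}
The Hopf lemma gives that $|\nabla v_i|$ is bounded below on $\partial \Omega$, so standard elliptic regularity shows that $w$ extends smoothly to $\overline{\Omega}$ with vanishing normal derivative. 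Hence $\mu_2 - \mu_1$ is precisely the first nontrivial Neumann eigenvalue of the drift Laplacian $L := \Lap + 2\nabla \log v_1 \cdot \nabla$, equivalently the spectral gap of the Laplacian on the weighted space $L^2(\Omega, v_1^2\, dV)$.

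The second step is to invoke Theorem~\ref{thm1}: because $v_1$ is (super) log-concave on $\Omega$, the potential $-2 \log v_1$ is convex, which is precisely the hypothesis required by \cite[Theorem 3]{CLR}. That theorem supplies a one-dimensional modulus-of-concavity comparison that produces a sharp lower bound for the first nontrivial Neumann eigenvalue of a drift Laplacian of exactly the above form on a domain in a space of sectional curvature $\ge -1$; the outcome is $\tfrac{\pi^2}{D^2}$ times an exponential correction $e^{-c(n)D}$ that absorbs the ambient negative-curvature contribution coming from a Jacobi/Riccati comparison. Substituting this bound into the identification obtained in the first step yields the corollary.

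The argument is entirely modular given Theorem~\ref{thm1}; the identification of $\mu_2 - \mu_1$ with the Neumann eigenvalue of $L$ is carried out in full detail in the proof of \cite[Corollary 2.2]{KNTW}, and only bookkeeping changes are needed for the hyperbolic setting. There is no real obstacle at this stage, since the genuinely hard analytic work is the log-concavity estimate itself. The only minor points to verify are that (i) the boundary behavior of $w$ produces the correct Neumann problem for $L$, and (ii) super log-concavity of $v_1$ is stronger than the log-concavity hypothesis required by \cite[Theorem 3]{CLR}; both are immediate from the definitions and Hopf's lemma.
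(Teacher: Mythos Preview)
Your proposal is correct and follows exactly the route the paper takes: the paper itself simply cites the Singer--Wong--Yau--Yau reduction \cite{SWYY85}, invokes \cite[Theorem~3]{CLR} once Theorem~\ref{thm1} supplies log-concavity, and points to the proof of \cite[Corollary~2.2]{KNTW} for the details. One minor slip: Hopf's lemma only guarantees $|\nabla v_1|>0$ on $\partial\Omega$ (since $v_1>0$), not $|\nabla v_2|$, but this is exactly what is needed to show that $w=v_2/v_1$ extends smoothly to $\overline{\Omega}$ with the Neumann condition, so the argument is unaffected.
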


 This result improves the estimate in \cite{KST} but is weaker than the estimate in \cite{KT} when the diameter $D$ goes to zero. One can also use \cite[Theorem 14]{Bakry-Qian} to get a lower bound for $\mu_2-\mu_1$ in terms of a one-dimensional model. Note that here we only used the log-concavity of the first eigenfunction. We expect a better estimate with proper utilization of the super log-concavity.

To prove our result, the key is to prove the super log-concavity of the first eigenfunction in the sense of Definition~\ref{def-sb1}. This allows us to utilize the horo-convexity of the domain. In particular, the extra barrier function $|\nabla \log v|$ nicely contributes to controlling the ``bad'' terms caused by the negative curvature of the manifold. With this setup, we use the idea of the proof of constant rank theorems, which has been used previously by various authors (see \cite{CF85,Kor87, GM03} for examples). More precisely, we apply the continuity method as a deformation process together with the strong maximum principle to force the convexity.


The organization of this paper is as follows. In Section \ref{sec-ball}, we show that the first eigenfunction of a geodesic ball is super-log-concave when the radius of the ball is not large. In Section \ref{sec-def}, we revisit some results in \cite{HLW22}. We will use these results in Section \ref{sec-main}. Following the idea of \cite{GM03}, we prove Theorem \ref{thm1} in Section \ref{sec-main}.

Acknowledgments: This material is based on work supported by the National Science Foundation under Grant No. DMS-1928930, while the authors were in residence at the Simons Laufer Mathematical Sciences Institute
(formerly MSRI) in Berkeley, California, during the fall semester of 2024. The authors thank Ben Andrews, Julie Clutterbuck, and Malik Tuerkoen for their interest in this work.  They also thank the referee for very careful reading and comments in improving the presentation of the paper. 

\section{Super log-concavity of the first eigenfunction for balls}
\label{sec-ball}
In this section we will show that the first eigenfunction of the geodesic ball in $\mathbb H^n$ is super log-concave when the radius of the ball is not large.
Let $\mathbb S^{n-1}$ be the unit sphere in Euclidean space $\mathbb R^{n}$
with the standard induced metric $dz^2,$ then the metric of $\mathbb H^n$ is
\[g=dr^2+\sinh^2rdz^2.\]

Let $B_r\subset\mathbb H^n$ be a geodesic ball in $\mathbb H^{n}$ with radius $r.$ We denote the first eigenfunction of $B_r$ by $v_B.$ It is easy to see that $B_r$ is a strictly horo-convex domain for all $r>0.$ We will show the following.
\begin{lemma}
\label{lem-sb1}
There exists a constant $r_0=r_0(n)>0$ that depends only on $n$ such that for all $r< r_0,$ the first eigenfunction $v_B$ of $B_r\subset\mathbb H^n$ is strictly 1-log concave.
\end{lemma}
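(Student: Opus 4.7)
By the rotational symmetry of $B_r$ and the simplicity of the first Dirichlet eigenvalue, $v_B = v_B(\rho)$ is radial; set $w(\rho) := \log v_B(\rho)$ on $[0, r)$. In the metric $d\rho^2 + \sinh^2\rho\, dz^2$, the Hessian of a radial function $w$ has eigenvalues $w''(\rho)$ (radial, simple) and $\coth\rho \cdot w'(\rho)$ (tangential, multiplicity $n-1$), while $|\nabla w| = |w'(\rho)|$. The maximum principle gives $w'(\rho) < 0$ on $(0, r)$, so $|\nabla w| = -w'$, and strict 1-log-concavity is equivalent to the two pointwise inequalities
\begin{equation*}
T(\rho) := -w'(\rho)(\coth\rho - 1) > 0 \quad \text{and} \quad R(\rho) := -w''(\rho) + w'(\rho) > 0 \quad \text{on } [0, r].
\end{equation*}
The tangential inequality $T > 0$ is immediate on $(0, r]$ since both factors are strictly positive, and by a Taylor expansion using the ODE below, $T(0^+) = \mu_1(B_r)/n > 0$.

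For the radial inequality, I would divide $v_B'' + (n-1)\coth\rho\, v_B' + \mu_1 v_B = 0$ by $v_B$ and use $w'' = v_B''/v_B - (w')^2$. Setting $\psi := -w' \ge 0$, this yields the Riccati equation
\begin{equation*}
\psi'(\rho) = \psi(\rho)^2 - (n-1)\coth\rho \cdot \psi(\rho) + \mu_1, \qquad \psi(0) = 0,
\end{equation*}
and a direct substitution shows $R(\rho) = \psi'(\rho) - \psi(\rho)$. The task therefore reduces to proving $\psi'(\rho) > \psi(\rho)$ on $[0, r]$ when $r$ is small.

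My plan is to establish this by rescaling to the Euclidean limit. Set $\tilde\rho := \rho/r$ and $\tilde\psi(\tilde\rho) := r\,\psi(r\tilde\rho)$, so the inequality becomes $d\tilde\psi/d\tilde\rho > r\tilde\psi$ and $\tilde\psi$ satisfies
\begin{equation*}
\frac{d\tilde\psi}{d\tilde\rho} = \tilde\psi^2 - r(n-1)\coth(r\tilde\rho)\,\tilde\psi + r^2\mu_1(B_r).
\end{equation*}
As $r \to 0$, $r\coth(r\tilde\rho) \to 1/\tilde\rho$ and $r^2\mu_1(B_r) \to \mu_1^E$, the first Dirichlet eigenvalue of the Euclidean unit ball. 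By standard ODE stability, $\tilde\psi$ converges (together with its derivative) on compact subsets of $[0, 1)$ to the analogous quantity $\tilde\psi_E$ for the Euclidean problem; the classical strict log-concavity of the radial eigenfunction on a Euclidean ball gives $d\tilde\psi_E/d\tilde\rho > 0$ with a positive lower bound on each compact subinterval, while $\tilde\psi = O(\tilde\rho)$ near $\tilde\rho = 0$ and $d\tilde\psi/d\tilde\rho \sim \tilde\psi^2$ near $\tilde\rho = 1$ from the Riccati equation. Patching the three regimes yields $d\tilde\psi/d\tilde\rho > r\tilde\psi$ on $[0, 1]$ whenever $r$ is below some threshold $r_0 = r_0(n)$. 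The chief technical obstacle is obtaining this comparison uniformly in $r$ near the blow-up $\tilde\rho = 1$, where both sides diverge; this is resolved by extracting the quadratic leading-order growth $d\tilde\psi/d\tilde\rho \sim \tilde\psi^2 \gg r\tilde\psi$ directly from the Riccati equation.
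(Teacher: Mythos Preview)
Your setup matches the paper's exactly through the reduction to the scalar inequality: both you and the paper identify the radial Riccati equation and observe that super log-concavity is equivalent to the tangential inequality (automatic) together with the radial one, which in your notation reads $\psi'>\psi$ and in the paper's reads $\varphi'-\varphi<0$ for $\varphi=-\psi$. From that point on the two arguments diverge.

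The paper proceeds by a first-zero argument. Setting $g=\varphi'-\varphi$, one has $g(0)=-\mu_1/n<0$; if $g$ ever vanishes, take the first $t_1$, so $g(t_1)=0$ and $g'(t_1)\ge 0$. From $\varphi'=\varphi$ and the Riccati equation one solves explicitly for $\varphi(t_1)$ in terms of $A=1+(n-1)\coth t_1$; from the differentiated equation one computes $g'(t_1)=\tfrac{n-1}{\sinh^2 t_1}\varphi+\mu_1-\varphi^2$, and the sign condition becomes an explicit algebraic inequality relating $A$, $B=\tfrac{n-1}{2\sinh^2 t_1}$, and $\mu_1$. One then checks directly that this inequality fails for all $t_1\in(0,r)$ once $r<r_0(n)$. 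The advantage is that the argument is entirely self-contained and in principle yields an explicit threshold.

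Your route---rescale, pass to the Euclidean Riccati limit, and invoke strict log-concavity of the first eigenfunction on the Euclidean ball together with a Riccati blow-up estimate near the boundary---is a genuinely different, perturbative proof. It is correct in outline and explains conceptually \emph{why} the lemma holds (small hyperbolic balls look Euclidean). Two points deserve a sentence each when you write it up: first, the coefficient $r\coth(r\tilde\rho)$ has the regular-singular piece $1/\tilde\rho$, so ``standard ODE stability'' should be phrased after observing that $r\coth(r\tilde\rho)-1/\tilde\rho=O(r^2)$ uniformly on $[0,1]$, which makes the perturbation nonsingular; second, the strict inequality $d\tilde\psi_E/d\tilde\rho>0$ is not literally Brascamp--Lieb (which gives only $\ge 0$) but follows from it plus the observation that at a hypothetical zero of $\psi_E'$ one has $\psi_E''=\tfrac{n-1}{\rho^2}\psi_E>0$, forcing $\psi_E'<0$ just before, a contradiction. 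With those details your compactness-plus-boundary-layer patching goes through; the paper's direct algebraic argument simply avoids the limiting step altogether.
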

\begin{proof}
We will follow the proof of Lemma 5.2 in \cite{NSW22}. Note that by separation of variables, $v_B$ is a radial function. Denote $\varphi=(\log v_B)'$ then $\varphi$ satisfies
\be\label{sb1}
\varphi'=-\frac{n-1}{\tanh t}\varphi-\mu_1-\varphi^2,
\ee
and
\be\label{sb2}
\varphi''=\frac{n-1}{\sinh^2t}\varphi-\frac{n-1}{\tanh t}\varphi'-2\varphi\varphi'.
\ee
Here, $\mu_1$ is the first eigenvalue of $B_r.$ We already know $\varphi(0)=0$ and $\varphi<0$ on $(0, r).$ In the following, we want to show when $r<r_0,$ the matrix
\[\lt(\nabla_{ij}(\log v_B)+|\nabla\log v_B|\delta_{ij}\rt)\]
is negative definite for all $t\in(0, r)$. This is equivalent to showing that $\varphi$ satisfies
\be\label{sb3}
(\coth t)\varphi-\varphi<0
\ee
and
\be\label{sb4}
\varphi'-\varphi<0.
\ee
It is clear that for all $r>0,$ the inequality \eqref{sb3} always holds. We only need to look at inequality \eqref{sb4}.
In view of \eqref{sb1} we get
\[\varphi'(0)=-\mu_1-(n-1)\lim\limits_{t\goto 0}\frac{\varphi}{\tanh t}=-\mu_1-(n-1)\varphi'(0).\]
This gives $\varphi'(0)=-\frac{\mu_1}{n}.$

Now denote $g:=\varphi'-\varphi,$ then we have $g(0)=-\frac{\mu_1}{n}<0.$ We want to show there exists a $r_0=r_0(n)>0$ such that when
$r< r_0,$ $g(t)<0$ on $[0, r).$ If not, then for any $r>0$ there would exist some $t_1\in(0, r)$ such that
$g(t_1)=0,$ $g(t)<0$ on $[0, t_1),$ and $g'(t_1)\geq 0.$ Note that when $g=0$ we have $\varphi'=\varphi.$ Combining with \eqref{sb1} we get, at $t_1$, 
\[-(n-1)(\coth t_1)\varphi-\mu_1-\varphi^2=\varphi.\]
Denote $A:=1+(n-1)\coth t_1,$ then
\begin{equation}
    \varphi(t_1)=\frac{-A\pm\sqrt{A^2-4\mu_1}}{2}.  \label{varphi at t1}
    \end{equation}
By \eqref{sb1} and \eqref{sb2} we also obtain
\begin{align*}
g'&=\varphi''-\varphi'\\
&=\frac{n-1}{\sinh^2t}\varphi-\frac{n-1}{\tanh t}\varphi'-2\varphi\varphi'-\lt(-\frac{n-1}{\tanh t}\varphi-\mu_1-\varphi^2\rt).
\end{align*}
In particular, at $t_1$ we have
\[g'(t_1)=\frac{n-1}{\sinh^2t_1}\varphi+\mu_1-\varphi^2.\]
Therefore, $g'(t_1)\geq 0$ is equivalent to
\be\label{sb5}
\varphi^2-\frac{n-1}{\sinh^2t_1}\varphi-\mu_1\leq 0.
\ee
Denoting $B:=\frac{n-1}{2\sinh^2 t_1},$ then inequality \eqref{sb5} holds iff
\[ B-\sqrt{\mu_1+B^2} \le
 \varphi(t_1)\leq B+\sqrt{\mu_1+B^2}.
\]
Since $\varphi$ is negative, this is equivalent to
\[
|\varphi(t_1)|\leq \sqrt{\mu_1+B^2} -B=\frac{\mu_1}{B+\sqrt{B^2+\mu_1}}.
\]
Take $\varphi(t_1) = \frac{-A + \sqrt{A^2-4\mu_1}}{2}$ in \eqref{varphi at t1}, the above is equivalent to 
\[\frac{2}{A+\sqrt{A^2-4\mu_1}}\leq\frac{1}{B+\sqrt{\mu_1+B^2}}.\]

By simple algebraic computations we can see, there exists a constant $r_0=r_0(n)>0$ which satisfies the following property: For every $r< r_0$ and $t_1\in(0, r)$
we have\[\frac{2}{A+\sqrt{A^2-4\mu_1}}>\frac{1}{B+\sqrt{\mu_1+B^2}}.\]
This finishes the proof of the Lemma.
\end{proof}

\subsection{Numerical computation in $B_r\subset\mathbb H^3$}
When $n=3,$ all the eigenvalues and eigenfunctions of the balls can be explicitly computed; see, e. g. \cite[(3)]{NSW22}.  The first eigenfunction of $B_r$ is $$v_B=(\sinh t)^{-1}\sin\frac{\pi}{r}t.$$ Let $\Phi=\log v_B,$ below we give a numerically computed graph of
$\Phi''-\Phi'$ for $r=1, 2, \cdots, 9.$\footnote{Figure 1 is graphed by Matthew McGonagle using Matplotlib \cite{Hunt}.}
 This graph shows that when $r>c_0$ for some $c_0\in (2, 3),$ the first eigenfunction $v_B$ of $B_r$ is no longer super log-concave.
\begin{figure}[ht]
\centering
\includegraphics[scale=0.52]{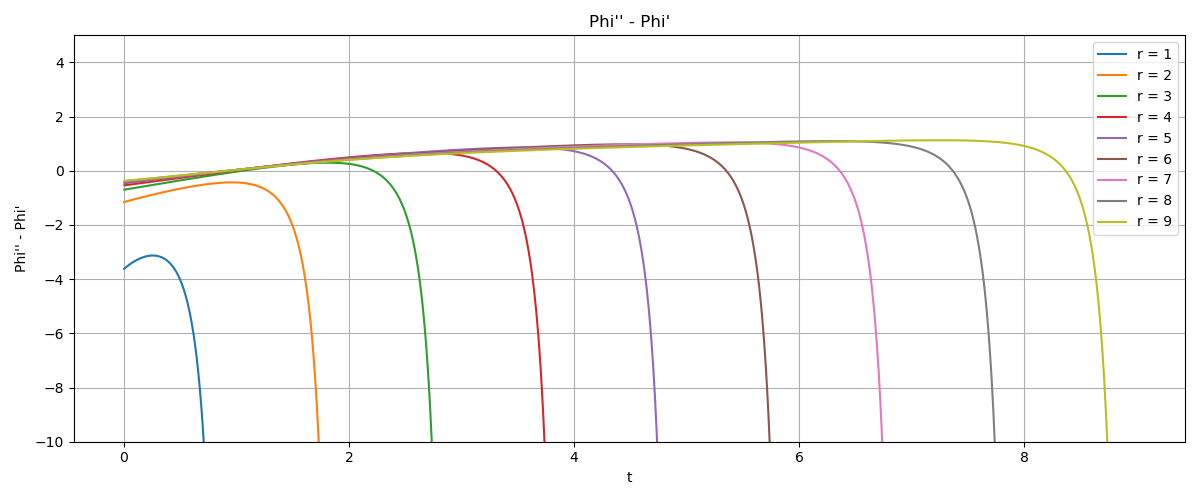}
\caption{Graph of $\Phi''-\Phi'$}
\end{figure}

\section{Deformation}
\label{sec-def}
In this section, we will provide a method that deforms a horo-convex hypersurface in the hyperbolic space into a geodesic sphere. Note that, there are many methods to do so, here we provide one that is convenient for later use.

In \cite{HLW22}, the authors studied the following flow
\be\label{sd1}
X_t=\lt[(\cosh r-u)\frac{E_{m-1}(\tilde\kappa)}{E_m(\tilde\kappa)}-u\rt]\nu,\,\,m=1, \cdots, n,
\ee
where $\nu$ is the unit outward normal of $M_t= X(M, t)$, $u = \langle \sinh r\,  \partial_r, \nu\rangle$ is the support function, $\tilde\kappa_i=\kappa_i-1$ are the shifted principal curvatures of $M_t,$ and
$$E_m(\tilde\kappa)=\frac{1}{{n\choose m}}\sum\limits_{1\leq i_1\leq\cdots\leq i_k\leq n}\tilde\kappa_{i_1}\cdots\tilde\kappa_{i_k}$$
is the normalized m-th elementary symmetric function of $\tilde\kappa.$ For each $m = 1, \cdots, n,$ let
\[\Gamma_m^+=\{x\in\mathbb R^n: E_i(x)>0, i=1, \cdots, m\}\]
be the Garding cone. They proved

\begin{theorem}[Theorem 1.6 of \cite{HLW22}]
\label{thm-sd1}
Let $X_0: M_n\goto\mathbb H^{n+1}(n\geq 2)$ be a smooth embedding such that $M_0 = X_0(M)$ is a
smooth, horo-convex hypersurface in $\mathbb H^{n+1}$ with $\tilde\kappa\in\Gamma^+_m$. The flow \eqref{sd1} then has a smooth solution
for all time $t\in[0, \infty),$ and $M_t=X_t(M)$ is strictly horo-convex for each $t > 0$ and converges smoothly
and exponentially to a geodesic sphere.
\end{theorem}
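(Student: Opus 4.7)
The plan is to treat \eqref{sd1} as a fully nonlinear parabolic flow and run the classical program: parabolicity and short-time existence, preservation of horo-convexity, uniform a priori estimates, long-time existence, and exponential convergence to a sphere. For short-time existence, I would first verify parabolicity on the relevant cone. On $\{\tilde\kappa \in \Gamma_m^+\}$ the quotient $E_{m-1}(\tilde\kappa)/E_m(\tilde\kappa)$ is smooth with positive-definite derivative in the Weingarten matrix, and horo-convexity forces $\cosh r - u > 0$ at every point (each point lies inside an enclosing horosphere). Combining these, the linearization of the full speed $F = (\cosh r - u) E_{m-1}/E_m - u$ is positive definite in the second fundamental form, so standard parabolic theory yields a short-time smooth solution.

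Next I would establish preservation of horo-convexity and of the Garding cone. Computing the evolution equations for $\tilde\kappa_{\min}$ (in a viscosity sense) and for the $E_j(\tilde\kappa)$ for $j \le m$, one applies the maximum principle at a hypothetical first touching point. The extra $-u$ term in the speed, together with the hyperbolic Gauss and Codazzi identities, should supply the strict sign needed to prevent $\tilde\kappa_{\min}$ from reaching zero, giving strict horo-convexity for $t > 0$. The $C^0$ bound follows from comparison with concentric geodesic spheres: spheres are stationary under \eqref{sd1}, so the avoidance principle confines $M_t$ to a fixed annulus. The key technical step is the $C^2$ estimate --- a uniform upper bound on $\tilde\kappa_{\max}$. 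The negative ambient curvature introduces bad terms in the evolution of the second fundamental form, and the inverse-concave speed $E_{m-1}/E_m$ precludes standard concavity-based arguments. One must design an auxiliary function combining $\tilde\kappa_{\max}$, $u$, and $\cosh r - u$ whose evolution absorbs these terms and admits a maximum principle.

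Once uniform $C^{2,\alpha}$ bounds are in hand, Krylov--Safonov and Schauder theory give higher regularity and long-time existence. For exponential convergence, I would use that geodesic spheres are exactly the stationary solutions of \eqref{sd1} and identify a scale-invariant pinching quantity --- for instance the oscillation of $u/\sinh r$ over $M_t$, or $\tilde\kappa_{\max}/\tilde\kappa_{\min}$ --- whose evolution, given the curvature estimates, forces exponential decay; together with smoothness this yields smooth exponential convergence to a geodesic sphere. The hard part is the $C^2$ upper bound on the principal curvatures: both the negative ambient curvature and the inverse-concave speed fight the usual maximum principle, and the whole flow is set up so that precisely the interplay of the $(\cosh r - u) E_{m-1}/E_m$ and $-u$ terms makes such a bound possible.
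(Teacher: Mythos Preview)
The paper does not prove this statement at all: Theorem~\ref{thm-sd1} is quoted verbatim as Theorem~1.6 of \cite{HLW22} and is used as a black box in Section~\ref{sec-main} to deform the given horo-convex domain to a geodesic ball. There is therefore no ``paper's own proof'' to compare your proposal against.

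Your outline is a plausible high-level plan for how such a flow result is typically established, but since the present paper only cites the theorem, the relevant comparison would have to be with \cite{HLW22} itself, not with anything here. If your goal is to supply a proof for this paper, the correct action is simply to cite \cite{HLW22}; if your goal is to reprove the flow result, you would need to consult \cite{HLW22} directly to see whether your sketched $C^2$ estimate and pinching argument match their actual argument (in particular, the monotone quantity they use for exponential convergence and the precise barrier for the curvature upper bound are specific to their setup and are not recoverable from the present paper).
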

They also proved
\begin{proposition}[Proposition 6.1 of \cite{HLW22}]
\label{pro-sd1}
Let $\varphi\in C^\infty(\mathbb S^n\times[0, \infty))$ be a smooth solution to
\[\varphi_t=\lt((\cosh r-u)\frac{E_{m-1}(\tilde\kappa)}{E_m(\tilde\kappa)}-u\rt)\frac{\sqrt{1+|D\varphi|^2}}{\sinh r}.\]
Then
\[\min\limits_{\theta\in\mathbb S^n}\varphi(\cdot, 0)\leq\varphi(\cdot, t)\leq \max\limits_{\theta\in\mathbb S^n}\varphi(\cdot, 0).\]
\end{proposition}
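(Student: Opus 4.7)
The plan is to apply the parabolic maximum principle directly to $\varphi$. Fix $T>0$ and suppose $\varphi$ attains a spatial maximum at $(\theta_0,t_0)\in\mathbb S^n\times(0,T]$; if $t_0=0$ there is nothing to show. At such a point, $D\varphi(\theta_0,t_0)=0$, $D^2\varphi(\theta_0,t_0)\leq 0$, and $\varphi_t(\theta_0,t_0)\geq 0$. I will show the evolution equation forces $\varphi_t(\theta_0,t_0)\leq 0$; combined with the reverse inequality one concludes $\max_{\mathbb S^n}\varphi(\cdot,t)$ is non-increasing. The analogous argument at a spatial minimum, swapping signs of the Hessian, yields the lower bound.

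At the extremum, since $D\varphi=0$, the outward unit normal to the radial graph $r=\varphi(\cdot,t_0)$ at the corresponding point of $\mathbb H^{n+1}$ is purely radial, so $\nu=\partial_r$ and
\[
u=\langle\sinh r\,\partial_r,\nu\rangle=\sinh\varphi(\theta_0,t_0),\qquad \cosh r-u=e^{-\varphi},\qquad \sqrt{1+|D\varphi|^2}=1.
\]
Let $R:=\varphi(\theta_0,t_0)$. The geodesic sphere of radius $R$ has all shifted principal curvatures equal to $\coth R-1=e^{-R}/\sinh R$. The standard formula for the second fundamental form of a radial graph $r=\varphi(\theta)$ in the warped product $dr^2+\sinh^2 r\,g_{\mathbb S^n}$, evaluated at a critical point of $\varphi$, expresses the principal curvatures as $\coth R$ minus a positive multiple of the eigenvalues of $D^2\varphi$. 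Because $D^2\varphi(\theta_0,t_0)\leq 0$, this yields $\tilde\kappa_i\geq e^{-R}/\sinh R$ at $(\theta_0,t_0)$, so the shifted curvature vector dominates (componentwise, in an appropriate frame) the one coming from the ambient geodesic sphere.

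The final algebraic input is that on the G\aa rding cone $\Gamma_m^+$ the quotient $E_{m-1}/E_m$ is non-increasing in each argument (a consequence of the Newton--MacLaurin inequalities). Applied to the comparison above,
\[
\frac{E_{m-1}(\tilde\kappa)}{E_m(\tilde\kappa)}\;\leq\;\frac{E_{m-1}\!\left(\tfrac{e^{-R}}{\sinh R},\dots\right)}{E_m\!\left(\tfrac{e^{-R}}{\sinh R},\dots\right)}\;=\;e^{R}\sinh R.
\]
Substituting everything into the evolution equation at $(\theta_0,t_0)$,
\[
\varphi_t=\frac{1}{\sinh\varphi}\!\left(e^{-\varphi}\cdot\frac{E_{m-1}(\tilde\kappa)}{E_m(\tilde\kappa)}-\sinh\varphi\right)\leq\frac{1}{\sinh\varphi}\bigl(e^{-\varphi}\cdot e^{\varphi}\sinh\varphi-\sinh\varphi\bigr)=0,
\]
which is the required contradiction (up to a standard $\varepsilon t$-perturbation if one wants strict inequalities). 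The symmetric computation at a spatial minimum gives $\tilde\kappa_i\leq e^{-R}/\sinh R$, hence $E_{m-1}/E_m\geq e^R\sinh R$ and $\varphi_t\geq 0$.

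The main obstacle is the first computation of the third paragraph: writing out the shifted principal curvatures of the radial graph at a critical point of $\varphi$ in terms of the spherical Hessian and pinning down the correct sign so that $D^2\varphi\leq 0$ corresponds to $\tilde\kappa_i\geq e^{-R}/\sinh R$ (and not the reverse). Once this geometric identity is isolated, the remaining ingredients---monotonicity of $E_{m-1}/E_m$ on $\Gamma_m^+$ and the parabolic maximum principle---are standard, and the equality on a geodesic sphere (which is a stationary solution of the flow \eqref{sd1}) serves as a convenient consistency check.
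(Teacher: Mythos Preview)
The paper gives no proof of this proposition; it is simply quoted from \cite{HLW22} and used as a black box in Section~\ref{sec-main}. Your maximum-principle argument is the standard one (and is essentially what \cite{HLW22} does): at a spatial maximum the graph is enclosed by the tangent geodesic sphere of radius $R=\varphi(\theta_0,t_0)$, so $\tilde\kappa_i\ge\coth R-1=e^{-R}/\sinh R$, and the monotonicity of $E_{m-1}/E_m$ on $\Gamma_m^+$ together with $\cosh R-\sinh R=e^{-R}$ forces $\varphi_t\le 0$; the minimum case is symmetric (the line segment from $\tilde\kappa$ up to the dominating constant vector stays in $\Gamma_m^+$ since the cone is preserved under coordinatewise increase). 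The sign issue you flag is real but you have it right, and the geodesic-sphere stationarity check confirms it.
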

This proposition implies that if $M_t=\{(\rho(\theta, t), \theta): (\theta, t)\in\mathbb S^n\times [0, \infty)\}$ is the family of flow hypersurfaces, then
\[\min\limits_{\theta\in\mathbb S^n}\rho(\cdot, 0)\leq\rho(\cdot, t)\leq \max\limits_{\theta\in\mathbb S^n}\rho(\cdot, 0).\]
 We will use these results in Section \ref{sec-main} to deform the horoconvex domain $\Omega$ to a geodesic ball.

\bigskip
\section{Log-concavity of the first eigenfunction}
\label{sec-main}
In this section, we will prove our main result Theorem \ref{thm1}. The idea of the proof follows \cite{GM03}.
\subsection{Set up}
\label{sub-set}
\quad\,\,Let $B_{\rho_0}\subset\mathbb H^n$ be a geodesic ball of radius $\rho_0$ in $\mathbb H^n.$ We will choose $r_0>\rho_0>0$ such that the first eigenvalue of $B_{\rho_0},$ which is denoted by $\mu_1(B),$ satisfying $\mu_1(B)\geq c^*:=c^*(n)>n^2/4.$ Here, $r_0$ is the constant determined in Lemma \ref{lem-sb1} and $c^*>n^2/4$ is a fixed constant that only depends on the dimension $n.$ It's clear that, there exists a $c_0=c_0(n)>0$ such that the above condition can be satisfied as long as $\rho_0\leq c_0.$

Now, letting $\Omega\subset B_{\rho_0}$ be a horo-convex domain containing the origin, we can flow $M_0=\partial\Omega$ by \eqref{sd1}. Then applying Theorem \ref{thm-sd1}, we know that $M_t$ is strictly horo-convex for $t>0$ and converges smoothly
and exponentially to a geometric sphere of radius $r_\infty,$ that is $\partial B_{r_\infty}.$ Moreover, by Proposition \ref{pro-sd1} we also have $M_t\subset B_{\rho_0},$ thus $r_{\infty}\leq\rho_0.$ This guarantees the first eigenfunction of $B_{r_\infty}$ is strictly 1-log-concave and $\mu_1(B_{r_\infty})\geq c^*.$ Moreover, using $\Omega_t$ to denote the domain inclosed by $M_t,$ then for all $t\in[0, \infty)$ we have $\mu_1(\Omega_t)\geq c^*.$

We want to point out that when $\Omega\subset B^2_{\rho_0}\subset\mathbb H^2$ is a 2 dimensional domain, we can always find a horo-convex domain
$\hat\Omega\subset B^3_{\rho_0}\subset\mathbb H^3$ such that $\hat\Omega\cap\{\theta_2=0\}=\Omega.$ Then we can apply Theorem \ref{thm-sd1} to
$\partial\hat\Omega$ which leads to a deformation of $\partial\Omega.$

We also want to point out that when there exists some point $p\in \partial\Omega$ such that $\tilde{\kappa}(p)\not\in\Gamma_1^+,$ that is, when
$M_0$ cannot be flowed by \eqref{sd1}, we can approximate $\Omega$ uniformly in $C^2$ by smooth horo-convex domains $\Omega^\epsilon$ satisfying $\tilde{\kappa}\in\Gamma_1^+$ for all $p\in\partial\Omega^\epsilon.$ We apply Theorem \ref{thm-sd1} to
$\partial\Omega^\epsilon$ and  obtain that the first eigenfunction of $\Omega^\epsilon$ is super log-concave. We then let $\epsilon$ tend to $0$ to complete the proof of Theorem \ref{thm1}. Therefore, in the following, without loss of generality, we will always assume that $M_0=\partial\Omega$ is a valid initial hypersurface for the flow \eqref{sd1}.

\subsection{Super log-concavity}
\label{sub-concave}
\quad\,\ In this subsection we will show that for all $t>0,$ the first eigenfunction of $\Omega_t$ is strictly 1-log-concave, that is, the matrix
\[\lt(-(\log v_{\Omega_t})_{ij}-|\nabla \log v_{\Omega_t}|\delta_{ij}\rt)\]
is positive definite, where $v_{\Omega_t}$ is the first eigenfunction of $\Omega_t.$ Since $\partial\Omega_t\goto\partial \Omega$ as $t\goto 0,$ by continuity, we will obtain the first eigenfunction of $\Omega$ is 1-log-concave, that is, super log-concave.

Recall that $\partial\Omega_t\goto\partial B_{r_\infty}$ smoothly and exponentially, by continuity we know when $t>T$ for some $T>0$ sufficiently large,  the first eigenfunction of
$\Omega_t$ is strictly 1-log-concave. Now we decrease $t$ and assume $t^*\in(0, T]$ to be the first time such that
$\lt(-(\log v_{\Omega_{t^*}})_{ij}-|\nabla \log
v_{\Omega_{t^*}}|\delta_{ij}\rt)$ is not positive definite in $\Omega_{t^*}.$ Moreover, without loss of generality, we will always assume $x_0\in\bar\Omega_{t^*}$ to be the ``most degenerate'' point in $\bar\Omega_{t^*}$, i.e., the eigenvalues of $\lt(-(\log v_{\Omega_{t^*}})_{ij}-|\nabla \log v_{\Omega_{t^*}}|\delta_{ij}\rt)$ contain the most zeros at $x_0$. In the following, we will show that $x_0$ does not exist in two steps:\\
\begin{itemize}
\item[1.] We will show that if $x_0$ exists then it's away from $\partial\Omega_{t^*}.$
\item[2.] We will show that $x_0$ cannot be an interior point of $\Omega_{t^*}.$
\end{itemize}

For simplicity, we will drop the subscript and write $v,$ $\Omega$ instead of $v_{\Omega_{t^*}},$ $\Omega_{t^*}.$
Let us denote $u:=-\log v,$ then $u$ satisfies
\be\label{sc1}
\left\{
\begin{aligned}
\Lap u&=\mu_1+w^2\,\,&\mbox{in $\Omega$}\\
u&\goto+\infty\,\,&\mbox{as $x\goto\partial\Omega,$}
\end{aligned}
\right.
\ee
where $$w=|\nabla u|.$$ We will show that the matrix
$(u_{ij}-w\delta_{ij})$ is positive definite.

\subsubsection{Strict 1-log-concavity near $\partial\Omega_t$}
\label{step1}

For convenience, we use the following definition.
\begin{definition}
\label{Def:domain-lambda-convex}
We say that a smooth domain $\Omega\subset\mathbb H^n$ is $\pmb{\lambda}$\textbf{-convex} if the second fundamental form of its boundary $\partial\Omega$ satisfies $\mathrm{II} \ge \lambda I_{n-1}$; \textbf{strictly $\pmb{\lambda}$-convex} if $\mathrm{II}>\lambda I_{n-1}.$
\end{definition}


By Theorem \ref{thm-sd1} we know that for any $t>0,$ there exists a $\lambda>1$ such that the domain $\Omega_t$ is $\lambda$-convex. In the following, we will show that the first eigenfunction of the above domain is strictly 1-log-concave near the boundary.
\begin{lemma}
 \label{sp1-lem1}
 Given a strictly $\lambda$-convex domain $\Omega \subset \mathbb H^n$ for $\lambda \in \mathbb R_+,$ let $v$ be the solution of \eqref{int1}, then $v$ is $\lambda$-log-concave in a small neighborhood of $\partial \Omega$.
\end{lemma}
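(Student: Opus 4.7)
The plan is to combine the Hopf-style boundary expansion of $v$ with a direct computation of $\nabla^2 \log v$ in Fermi coordinates near $\partial \Omega$. Since $\partial\Omega$ is smooth and Hopf's lemma gives $\partial_\nu v > 0$ on $\partial\Omega$, the quotient $\phi := v/d$ extends to a smooth, strictly positive function on a one-sided collar $\{0 \le d < \delta_0\}$, where $d(x) = \mathrm{dist}(x,\partial\Omega)$. Thus $\log v = \log d + \log\phi$ with $\log\phi$ smooth and bounded in the collar, and the delicate part of the estimate reduces to the derivatives of $\log d$ plus bounded errors.

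Next, I would work in Fermi coordinates $(t,y)$ based at $\partial \Omega$, in which $d = t$ and the metric takes the form $dt^2 + h_{ij}(t,y)\,dy^i dy^j$. A standard computation gives $(\nabla^2 d)_{tt} = 0$, $(\nabla^2 d)_{t\alpha} = 0$, and $(\nabla^2 d)_{\alpha\beta} = -\mathrm{II}^{\,t}_{\alpha\beta}$, where $\mathrm{II}^{\,t}$ denotes the second fundamental form of the level hypersurface $\{d = t\}$ with respect to the inward unit normal $\nabla d$. Strict $\lambda$-convexity of $\partial\Omega$ gives $\mathrm{II} \ge (\lambda + \varepsilon_0)\,I_{n-1}$ on $\partial\Omega$ for some $\varepsilon_0 > 0$, which by continuity persists as $\mathrm{II}^{\,t} \ge (\lambda + \varepsilon_0/2)\,I_{n-1}$ on a uniform collar $\{0 \le t < \delta_1\}$. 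Using $(\log d)_{ij} = d_{ij}/d - d_i d_j/d^2$, $|\nabla d| = 1$, and $\nabla d \perp \partial_{y^\alpha}$, one finds in the collar
\[
|\nabla \log v| = \tfrac{1}{d} + O(1), \qquad -(\log v)_{\alpha\beta} \ge \tfrac{\lambda + \varepsilon_0/2}{d}\, g_{\alpha\beta} - C, \qquad -(\log v)_{tt} = \tfrac{1}{d^2} + O(1),
\]
while the mixed second derivatives $(\log v)_{t\alpha}$ remain bounded (the $\log d$ contribution to them vanishes identically).

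Subtracting $\lambda|\nabla \log v|\, g_{ij}$ then yields a matrix $M$ whose tangential block satisfies $M_{\alpha\beta} \ge (\varepsilon_0/2d)\, g_{\alpha\beta} - C$, whose normal entry is $M_{tt} = 1/d^2 - \lambda/d + O(1)$, and whose off-diagonal $t\alpha$-entries are $O(1)$. For $d$ sufficiently small (depending only on $\Omega$ and $\lambda$), both diagonal blocks are positive definite; a Schur complement estimate then forces $M \succ 0$, since the correction $M_{t\alpha}(M^{\alpha\beta})^{-1}M_{\beta t}$ to the normal entry is at most of order $d$, which is dwarfed by the $1/d^2$ leading term. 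The main obstacle is careful bookkeeping of the bounded error terms coming from $\log \phi$ and from the $t$-dependence of $h_{ij}$; the essential point is that strict $\lambda$-convexity provides the gap $\varepsilon_0$ which absorbs all such bounded errors as $d \to 0$.
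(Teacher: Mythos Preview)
Your argument is correct and rests on the same boundary mechanism as the paper's: the tangential block of $-(\log v)_{ij}-\lambda|\nabla\log v|\,g_{ij}$ is controlled by the second fundamental form via strict $\lambda$-convexity, while the normal direction is dominated by the $1/d^2$ blow-up of $(\partial_t\log v)^2$.

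The packaging, however, is different. The paper does not introduce Fermi coordinates, the factorisation $v=d\phi$, or a Schur complement. Instead it works pointwise with the identity
\[
(\log v)_{ee}+\lambda|\nabla\log v|\,\|e\|^2=\frac{1}{v}\Big(v_{ee}-\frac{(\nabla_e v)^2}{v}+\lambda|\nabla v|\,\|e\|^2\Big),
\]
splits an arbitrary vector as $e=e^T+e^\perp$ relative to $\nabla v$, and observes two things: if $e^\perp\neq 0$ then the term $-(\nabla_e v)^2/v=-|\nabla v|^2|e^\perp|^2/v\to-\infty$ dominates; if $e=e^T$ then on $\partial\Omega$ one has $v_{ee}=-\mathrm{II}(e,e)|\nabla v|$, so the bracket equals $(-\mathrm{II}(e,e)+\lambda\|e\|^2)|\nabla v|<0$, and continuity extends this to a collar. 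Your Fermi-coordinate computation and Schur complement make the same cross-term control explicit and quantitative; the paper's vector decomposition handles it more succinctly by letting the $-(\nabla_e v)^2/v$ term absorb any mixed contribution. Both routes are standard; yours gives slightly more information (explicit orders in $d$), while the paper's is shorter.
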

We can follow the proof in \cite[Lemma 3.4]{SWW2019}.
\begin{proof}Let $\nu$ denote the unit outward normal of $\partial\Omega$. Then
  for any $p \in \partial \Omega$,  $\nabla v|_{p} = - \|\nabla v\| \nu_{p}$. Now for any $e \in T_{p}\partial\Omega$ such that $e \perp \nu$, we have
  \be
  \Hess \, v (e, e) =  \mathrm{II} (e, e)   \nabla_\nu v = - \mathrm{II} (e, e) |\nabla v|,  \label{Hess-II}
  \ee
  and
  \be
  \Hess \log v (e, e) + \lambda |\nabla \log v| \|e\|^2 = \frac 1v \left( \Hess\, v(e,e) - \tfrac{|\nabla_e v|^2}{v} + \lambda |\nabla v| \|e\|^2 \right).  \label{Hess-log}
  \ee
  Write $e = e^T +e^\perp$, where $e^\perp = \langle e, \tfrac{\nabla v}{\|\nabla v\|} \rangle \tfrac{\nabla v}{\|\nabla v\|}$. 
Then $|\nabla_e v|^2 = \langle e, \nabla v\rangle^2 = |\nabla v|^2 |e^\perp|^2$. Since as $x \to \partial \Omega$, $v \to 0$ and $|\nabla v| \not=0$, we have  $\Hess \log v (e, e) + \lambda|\nabla \log v| \|e\|^2 \to -\infty$ when 
$e^\perp \not =0$.

  For $e = e^T$, from \eqref{Hess-II} and\eqref{Hess-log},
  \be
  \label{sp1.1}
   \Hess \log v (e, e) + \lambda|\nabla \log v| \|e\|^2 = \frac 1v \left( - \mathrm{II} (e,e) |\nabla v| + \lambda|\nabla v| \|e\|^2 \right).
  \ee
 Since $\Omega$ is strictly $\lambda$-convex, we can see that \eqref{sp1.1} is negative. By continuity, we know that $v$ is $\lambda$-log-concave in a small neighborhood of $\partial \Omega$.
\end{proof}
\subsubsection{Strictly 1-log-concavity in $\Omega_t$}
\label{step2}
By the discussion from Subsection \ref{step1} we know that the degenerate point $x_0$ of the matrix $\lt(u_{ij}-w\delta_{ij}\rt)$, if it exists, must be away from
$\partial\Omega.$ In this subsection, we show that $x_0$ cannot be an interior point of $\Omega.$ In other words, $x_0$ does not exist.

\begin{lemma}
\label{lem-sp2.1}
Let $U\subset\Omega$ be a neighborhood of the degenerate point $x_0,$ and $u\in C^\infty(\Omega)$ a solution of \eqref{sc1}.
Suppose the matrix $\Lambda=(\Lambda_{ij}),$ where $\Lambda_{ij}:=u_{ij}-w\delta_{ij},$ is positive semi-definite in
$U.$ Then $\Lambda$ is positive definite in $U.$
\end{lemma}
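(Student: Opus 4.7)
The plan is to apply a constant-rank/strong-maximum-principle argument in the spirit of Caffarelli--Friedman \cite{CF85} and Guan--Ma \cite{GM03}. Suppose for contradiction that the rank of $\Lambda$ drops below $n$ somewhere in $U$. Let $\ell = \min_{x \in U} \mathrm{rank}\,\Lambda(x) < n$, set $q = n - \ell \ge 1$, and let $W \subset U$ be the nonempty set where this minimum is attained. By lower semi-continuity of the rank for positive semi-definite matrices, $W$ is closed in $U$. The goal is to prove that $W$ is also open, which together with connectedness of $U$ yields $W = U$, contradicting the assumption that $\Lambda$ is positive definite somewhere in $U$ (which is a neighborhood of a boundary point in the global argument).

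To prove openness, fix $x_0 \in W$ and work in a small neighborhood $V$. Choose a local frame that diagonalizes $\Lambda(x_0)$ and split the indices into a \emph{good} set $G$ of size $\ell$ (whose eigenvalues remain bounded below by a positive constant on $V$) and a \emph{bad} set $B$ of size $q$ (whose eigenvalues vanish at $x_0$). Following the standard constant-rank machinery, take as test function the elementary symmetric expression
\[\phi(x) \;=\; \sigma_{q}\!\bigl(\Lambda(x)\bigr) \cdot \bigl[\sigma_1(\Lambda) - \mathrm{tr}\,\Lambda|_{G}\bigr](x),\]
which, up to multiplication by a positive $C^2$ factor, equals the sum of the $q$ smallest eigenvalues of $\Lambda$. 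Then $\phi \ge 0$ on $V$ and $\phi(x_0)=0$.

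The heart of the argument is to derive a linear elliptic differential inequality
\[L\phi \;\le\; C\bigl(\phi + |\nabla \phi|\bigr) \quad \text{on } V,\]
where $L = \sum_{i,j \in G} a^{ij}\partial_{ij}$ is uniformly elliptic in the good directions. This is obtained by differentiating \eqref{sc1} twice, commuting covariant derivatives via the Ricci identity on $\mathbb H^n$, and tracking the curvature contribution. The third-order obstruction is handled by the Caffarelli--Friedman trick: at a point of $W$, all first covariant derivatives $\Lambda_{ij,k}$ with $i,j \in B$ vanish, so the quadratic error terms involving $|\Lambda_{ij,k}|^2$ can be written as $\sum_{i \in B, k} \Lambda_{ii}^{-1}$-weighted sums that are controlled by $\phi$ and $|\nabla \phi|$.

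The main obstacle is that the ambient curvature $-1$ produces Ricci-type terms of the form $-u_{ij} + u_{kk}\delta_{ij}$ in the commutator, which carry the wrong sign for the maximum principle. The crucial observation is that the barrier $-w\delta_{ij}$ built into $\Lambda$ (rather than the bare Hessian) splits these terms as $-(\Lambda_{ij} + w\delta_{ij})$: the $\Lambda_{ij}$ part is absorbable into $\phi$ on $V$, while the $w\delta_{ij}$ part is balanced by the Bochner identity applied to $w^2$, namely $\tfrac{1}{2}\Delta w^2 = |\Hess u|^2 + \langle \nabla u, \nabla \Delta u\rangle - (n-1)|\nabla u|^2$, where the negative Ricci contribution $-(n-1)w^2$ is precisely what is needed to cancel the bad curvature terms coming from the commutator. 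Once the differential inequality is established, the strong maximum principle (in the degenerate form of Bony used in constant-rank theorems) forces $\phi \equiv 0$ on a neighborhood of $x_0$, so $\mathrm{rank}\,\Lambda \le \ell$ on $V$; combined with semi-definiteness this gives $\mathrm{rank}\,\Lambda = \ell$ on $V$, proving openness and completing the proof.
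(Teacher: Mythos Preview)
Your outline follows the constant-rank template, but the decisive step is not there. You assert that the curvature terms produced by commuting fourth derivatives are ``precisely'' cancelled by the Bochner contribution $-(n-1)w^2$. In the actual computation that cancellation is only partial: after combining the commutator identity $u_{ii\alpha\alpha}=u_{\alpha\alpha ii}-2u_{ii}+2u_{\alpha\alpha}$ with the Bochner formula for $w$, one still arrives at (see \eqref{sp2.9}--\eqref{sp2.14})
\[
\Lap\phi \;\lesssim\; S_l(G)\sum_{i\in B}a_i^2\Bigl[\,2w^2-\sum_{\alpha\in G}\tfrac{\lambda_\alpha^2}{w}-\tfrac{2(2w-n)^2w^2}{\lambda_1 l}\,\Bigr],
\]
and the bracket has no intrinsic sign. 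It is forced to be nonpositive \emph{only} via the standing hypothesis $\mu_1\ge c^*(n)$ from Subsection~\ref{sub-set}; the same hypothesis is what guarantees $S_1(\Lambda)=\mu_1+w^2-nw>0$, so that the good index set $G$ is nonempty. Your sketch never invokes this eigenvalue lower bound, and without it the inequality $L\phi\le C(\phi+|\nabla\phi|)$ cannot be established. Indeed, if a pure cancellation argument worked, it would prove super log-concavity for every horo-convex domain regardless of size, contradicting the numerical counterexamples in Section~\ref{sec-ball}.

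Two further points. First, the paper disposes of the case $w(x_0)=|\nabla u(x_0)|=0$ by a separate short argument (comparing $\Lambda$ with $u_{ij}-\langle\nabla u,\mathbf v\rangle\delta_{ij}$ and with $u_{ij}$ to force $u_{11}(x_0)=0$, contradicting \eqref{sc1}); this is needed because the main estimate divides by $w$ through $a_i=u_i/w$. Second, the test function you wrote, $\sigma_q(\Lambda)\cdot[\sigma_1(\Lambda)-\operatorname{tr}\Lambda|_G]$, is not the one used and is not in general a positive multiple of the sum of the $q$ smallest eigenvalues (e.g.\ when $q>\ell$ the factor $\sigma_q(\Lambda)$ itself vanishes at $x_0$). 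The paper simply takes $\phi=S_{l+1}(\Lambda)$ and works with the full Laplacian rather than a degenerate operator in the good directions.
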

\begin{proof}
For $x_0\in U,$ we may choose a local orthonormal frame $\{e_1, \cdots, e_n\}$ at $x_0$ so that $\Lambda$ is diagonal at $x_0,$ and $\Lambda_{ii}=\lambda_i$ for $i=1, \cdots, n.$
We may also assume
$\lambda_1\geq\lambda_2\geq\cdots\geq\lambda_n.$ Note that, by our assumption we have $\lambda_n(x_0)=0.$

First, we assume $w(x_0)=|\nabla u(x_0)|=0.$ Let $\bf{v}$ be a vector field obtained by parallel translating $e_1$ in a neighborhood $U$ of $x_0.$
Consider the matrix $\tilde\Lambda_{ij}:=u_{ij}-\lt<\nabla u, \bf{v}\rt>\delta_{ij}.$ It is clear that
$(\tilde\Lambda_{ij})$ is positive semi-definite in $U$ and $x_0$ is a degenerate point. Therefore, we have
$$\tilde\Lambda_{nn1}=0\,\,\mbox{ at $x_0.$}$$
This implies \be\label{sp2.1.1}u_{nn1}(x_0)=u_{11}(x_0).\ee On the other hand, the matrix $(u_{ij})$ is also positive semi-definite in $U.$
Hence, \be\label{sp2.1.2}u_{nn1}(x_0)=0.\ee Combining \eqref{sp2.1.1} and \eqref{sp2.1.2} we get $u_{11}(x_0)=0.$ By virtue of \eqref{sc1} this is impossible.
Therefore, at the degenerate point $x_0$ we always have $w(x_0)\neq 0.$

Next, we still use $U$ to denote a neighborhood of $x_0,$ since $w(x_0)\neq 0,$ we may also assume $w(x)\neq 0$ for any $x\in U.$
For our convenience,  we denote 
\begin{equation}
    \frac{u_i(x)}{w}=a_i(x),
    \label{a_i}
    \end{equation}
    then we have $\sum_{i=1}^na_i^2=1$.  
    
    Below, we follow the idea of the proof of Lemma 4.1 in \cite{GM03}.

 Let
\[S_k(\Lambda)= S_k(\lambda_1, \cdots, \lambda_n) = \sum\limits_{1\leq i_1\leq\cdots\leq i_k\leq n}\lambda_{i_1}\cdots\lambda_{i_k}\]
 be the $k$-th symmetric function. By our assumption that $\mu_1\geq c^*>n^2/4$, we have $$S_1(\Lambda)=\sum_{i=1}^n \lambda_i = \mu_1 + w^2 - nw = \mu_1 + (w - \tfrac n2)^2 - \frac{n^2}{4} \ge \mu_1 -\frac{n^2}{4} >0.$$ Recall that $x_0$ is the ``most degenerate'' point in $\Omega,$ thus there exists an integer $1\leq l\leq n-1$ and a positive constant $C_0>0 $ such that $S_l(\Lambda)\geq C_0$ for all $x\in U$ and $S_{l+1}(\Lambda(x_0))=0.$
 
We will denote $\phi(x):=S_{l+1}(\Lambda(x)).$
For two functions defined in an open set $U\subset\Omega,$ $x\in U,$ we say that $h(x)\lesssim k(x)$ provided there exist positive constants $c_1$ and $c_2$
such that
\[(h-k)(x)\leq c_1|\nabla\phi(x)|+c_2\phi(x).\]
We also write $h(x)\thicksim k(x)$ if $h(x)\lesssim k(x)$ and $k(x)\lesssim h(x).$ Moreover, we write $h\lesssim k$ if the above inequality holds in $U$,
with constants $c_1$ and $c_2$ depending only on $\|u\|_{C^3}, n,$ and $C_0.$

Since $S_l(\Lambda)\geq C_0>0$ for all $x\in U,$ there is a positive constant $C>0$ depending only on $\|u\|_{C^3}, n,$ and $C_0,$ such that
\[\lambda_1\geq\lambda_2\geq\cdots\geq\lambda_l\geq C.\]
Let $G=\{1, 2, \cdots, l\}$ and $B=\{l+1, \cdots, n\}$ be the ``good '' and ``bad'' sets of indices, respectively. Denote $S_k(G) = S_k(\lambda_1, \cdots, \lambda_l),$ 
as $\phi=S_{l+1}(\Lambda)$
we have
\be\label{sp2.1}
0\thicksim\phi(x)\thicksim S_l(G)\sum_{i\in B}\Lambda_{ii}\thicksim\sum_{i\in B}\Lambda_{ii}.
\ee
Recall that $\Lambda_{ii}\geq 0,$ from \eqref{sp2.1} we get when $i\in B,$ $\Lambda_{ii}(x)\thicksim 0$ for all $x\in U.$
Therefore, for $i\in B$
\be\label{sp2.2}
u_{ii}\thicksim w.
\ee
Similarly, since $\phi_\alpha=\sum_{i, j} S^{ij}_{l+1}\Lambda_{ij\alpha}$ for $S^{ij}_{l+1}=\frac{\partial S_{l+1}}{\partial\Lambda_{ij}}=S_l(\Lambda|i),$ we obtain
\be\label{sp2.2*}0\thicksim\phi_\alpha\thicksim S_l(G)\sum_{i\in B}\Lambda_{ii\alpha}\thicksim\sum_{i\in B}\Lambda_{ii\al}.\ee
Here and below we will use $(\Lambda|i)$ to denote the matrix obtained by excluding the $i$th-column and $i$th-row from $\Lambda.$
This implies
\be\label{sp2.3}
\sum_{i\in B}u_{ii\alpha}\thicksim(n-l)w_\alpha.
\ee
Finally, we will compute $\Lap\phi(x)$ in $U.$ Following the steps on page 567 of \cite{GM03}, with \cite[(4.15)]{GM03} we get
\be\label{sp2.4}
\begin{aligned}
\Lap\phi&\thicksim\sum_{\alpha, i}S^{ii}_{l+1}\Lambda_{ii\alpha\alpha}-2\sum_\alpha\sum\limits_{i\in B, j\in G}S_{l-1}(G|j)\Lambda^2_{ij\alpha}
-S_{l-1}(G)\sum_\alpha\sum\limits_{i, j\in B}\Lambda^2_{ij\alpha}\\
&\thicksim S_l(G)\sum_\alpha\sum_{i\in B}\Lambda_{ii\alpha\alpha}-2S_l(G)\sum_\alpha\sum\limits_{i\in B, j\in G}\frac{\Lambda^2_{ij\alpha}}{\lambda_j}
-S_{l-1}(G)\sum_\alpha\sum\limits_{i, j\in B}\Lambda^2_{ij\alpha}.
\end{aligned}
\ee
A straightforward calculation with the definition of $a_i$ in \eqref{a_i} and the assumption that $(u_{ij})$ is diagonalized at $x_0$ yields,
\be\label{sp2.5}
w_i=\sum_j\frac{u_ju_{ji}}{w}=a_iu_{ii},
\ee
and
\be\label{sp2.6}
\begin{aligned}
w_{ii}&=\sum_j\frac{u^2_{ji}+u_ju_{jii}}{w}-\sum_j\frac{u_ju_{ji}}{w^2}w_i\\
&=\frac{u^2_{ii}}{w}+\sum_{j\neq i}a_j(u_{iij}-u_j)+a_iu_{iii}-\frac{a^2_iu^2_{ii}}{w}.
\end{aligned}
\ee
Here, we used $u_{jii}=u_{iij}-u_j$ for $i\neq j$ in $\mathbb H^n.$
This gives,
\be\label{sp2.7}
\Lap w=\sum_i\frac{(1-a^2_i)u^2_{ii}}{w}-(n-1)w+\sum_ia_i(2ww_i).
\ee

Now for fixed $i\in B,$ applying the commutation formula $u_{ii\alpha\alpha}=u_{\al\al ii}-2u_{ii}+2u_{\al\al}$ we obtain
\be\label{sp2.8}
\begin{aligned}
\sum_\al u_{ii\al\al}&=\sum_\al(u_{\al\al ii}-2u_{ii}+2u_{\al\al})\\
&\thicksim(\Lap u)_{ii}-2nw+2\Lap u\\
&=(2w^2_i+2ww_{ii})-2nw+2(\mu_1+w^2)\\
&=2a_i^2w^2+2w\lt(\frac{u^2_{ii}}{w}+\sum_\al a_{\al}u_{ii\al}-w\rt)-2nw+2(\mu_1+w^2)\\
&=2a_i^2w^2+2w\sum_\al a_{\al}u_{ii\al}-2nw+2(\mu_1+w^2).
\end{aligned}
\ee
Here, we have used the equations \eqref{sp2.2}, \eqref{sp2.5}, and \eqref{sp2.6}.
Combining \eqref{sp2.8} with \eqref{sp2.7} we get, for any fixed $i\in B$
\be\label{sp2.9}
\sum_\al\Lambda_{ii\al\al}\thicksim 2a_i^2w^2+2w\sum_\al a_\al\Lambda_{ii\al}+2\mu_1
+2w^2-(n+1)w-\sum_\al\frac{(1-a_\al^2)u_{\al\al}^2}{w}.
\ee
Notice that the last term of \eqref{sp2.9} can be written as follows.
\begin{align*}
\sum_\al\frac{(1-a_\al^2)u_{\al\al}^2}{w}&=\sum_\al\frac{(1-a_\al^2)(\lambda^2_\al+2\lambda_\al w+w^2)}{w}\\
&=\sum_\al\frac{(1-a^2_\al)\lambda^2_\al}{w}+2\sum_\al(1-a^2_\al)\lambda_\al+\sum_\al(1-a_\al^2)w\\
&\thicksim\sum_{\al\in G}\frac{(1-a^2_\al)\lambda^2_\al}{w}+2\sum_\al\lambda_\al-2\sum_\al a^2_\al\lambda_\al+(n-1)w.
\end{align*}
In view of the equality $\sum_\al\lambda_\al=\mu_1+w^2-nw,$ equation \eqref{sp2.9} becomes
\be\label{sp2.10}
\sum_\al \Lambda_{ii\al\al}\lesssim a^2_i\lt(2w^2-\sum_{\al\in G}\frac{\lambda^2_\al}{w}\rt)+2w\sum_\al a_\al\Lambda_{ii\al}
+2\sum_\al a^2_\al\lambda_\al,\,\,i\in B,
\ee
where we have used $1-a^2_\al\geq a_i^2$ for any $\al\in G.$
Therefore, in conjunction with
\eqref{sp2.2*} we conclude,
\be\label{sp2.11}
S_l(G)\sum_\al\sum_{i\in B}\Lambda_{ii\al\al}
\lesssim S_l(G)\lt[\sum_{i\in B}a_i^2\lt(2w^2-\sum_{\al\in G}\frac{\lambda^2_\al}{w}\rt)+2(n-l)\sum_{\al\in G} a^2_\al\lambda_\al\rt].
\ee

Now, for any fixed $j\in G$ by the Cauchy-Schwarz inequality we have
\[\sum_{i\in B}\Lambda^2_{iji}\geq\frac{\lt(\sum_{i\in B}\Lambda_{iji}\rt)^2}{n-l}.\]
Using \eqref{sp2.3} we get
\begin{align*}
\sum_{i\in B}\Lambda_{iji}&=\sum_{i\in B}u_{iji}=\sum_{i\in B}(u_{iij}-u_j)\\
&\thicksim(n-l)(w_j-u_j)=(n-l)a_j\lambda_j.
\end{align*}
Therefore, we obtain for any fixed $j\in G,$
\be\label{sp2.12}
\sum_{i\in B}\frac{\Lambda^2_{iji}}{\lambda_j}\gtrsim(n-l)a_j^2\lambda_j.
\ee
Similarly, we can compute
\[\sum_{i\in B, j\in G}\frac{\Lambda^2_{ijj}}{\lambda_j}\geq\sum_{i\in B, j\in G}\frac{\Lambda^2_{ijj}}{\lambda_1}
\geq\frac{1}{\lambda_1l}\sum_{i\in B}\lt(\sum_{j\in G}\Lambda_{ijj}\rt)^2.\]
For fixed $i\in B,$ it is easy to see that
\[\sum_{j\in G}\Lambda_{ijj}=\sum_{j\in G}u_{ijj}=\sum_{j\in G}(u_{jji}-u_i).\]
In view of equation \eqref{sc1} we get
\[\sum_{j\in G}u_{jji}+\sum_{j\in B}u_{jji}=2ww_i.\]
Applying \eqref{sp2.3} we conclude for any fixed $i\in B$
\[\sum_{j\in G}u_{jji}\thicksim(2w-n+l)a_iw.\]
This gives
\[\sum_{j\in G}\Lambda_{ijj}\thicksim(2w-n)a_iw.\]
Therefore, we obtain for any fixed $i\in B,$
\be\label{sp2.13}
\sum_{j\in G}\frac{\Lambda^2_{ijj}}{\lambda_j}\gtrsim\frac{(2w-n)^2a_i^2w^2}{\lambda_1 l}.
\ee
Plugging \eqref{sp2.11}, \eqref{sp2.12}, and \eqref{sp2.13} into \eqref{sp2.4} we have
\be\label{sp2.14}
\begin{aligned}
\Lap\phi&\lesssim S_l(G)\lt[\sum_{i\in B}a^2_i\lt(2w^2-\sum_{\al\in G}\frac{\lambda^2_\al}{w}\rt)+2(n-l)\sum_{\al\in G}a^2_{\al}\lambda_\al\rt]\\
&-2S_l(G)\lt[\sum_{\al\in G}(n-l)a^2_\al\lambda_\al+\frac{(2w-n)^2}{\lambda_1l}\sum_{i\in B}a^2_iw^2\rt]\\
&=S_l(G)\sum_{i\in B}a^2_i\lt[2w^2-\sum_{\al\in G}\frac{\lambda^2_\al}{w}-2\frac{(2w-n)^2w^2}{\lambda_1l}\rt].
\end{aligned}
\ee
Since
\[\sum_{\al\in G}\lambda^2_\al>\frac{\lt(\sum_{\al\in G}\lambda_\al\rt)^2}{l}\gtrsim\frac{(\mu_1+w^2-nw)^2}{l},\]
it is easy to see that there exists $c^*=c^*(n)>0$   only depending on $n$ (e.g. $10n^2$) such that when $\mu_1\geq c^*$ we always have,
\[\lt[2w^2-\sum_{\al\in G}\frac{\lambda^2_\al}{w}-2\frac{(2w-n)^2w^2}{\lambda_1l}\rt]\leq 0.\]
By the strong maximum principle we conclude that $\phi\equiv 0$ in $U.$ Repeating the above argument, we would conclude that $\phi=0$ whenever $|\nabla u|\neq 0$ in $\Omega,$  which is impossible.
\end{proof}

From the above discussions, we know that for any $t>0,$ the first eigenfunction of $\Omega_t$ is strictly 1-log-concave. Therefore, by continuity, we know that
the first eigenfunction of $\Omega$ is 1-log-concave, that is, super log-concave. This proves Theorem \ref{thm1}.

\end{document}